\documentclass[12pt]{amsart}
\usepackage{amsmath,amssymb,amsfonts,amsthm}
\usepackage[all,cmtip]{xy}
\usepackage{fullpage}
\voffset=-1.2truecm
\textwidth=15.5truecm
\textheight=23.2truecm
\hoffset=-0.8truecm
\begin{document}
\theoremstyle{plain}
\newtheorem{thm}{Theorem}[subsection]
\newtheorem{lem}[thm]{Lemma}
\newtheorem{cor}[thm]{Corollary}
\newtheorem{prop}[thm]{Proposition}
\newtheorem{rem}[thm]{Remark}
\newtheorem{defn}[thm]{Definition}
\newtheorem{ex}[thm]{Example}
\newtheorem{ques}[thm]{Question}
\newtheorem{fact}[thm]{Fact}
\newtheorem{conj}[thm]{Conjecture}
\numberwithin{equation}{subsection}
\def\theequation{\thesection.\arabic{equation}}
\newcommand{\mc}{\mathcal}
\newcommand{\mb}{\mathbb}
\newcommand{\surj}{\twoheadrightarrow}
\newcommand{\inj}{\hookrightarrow}
\newcommand{\zar}{{\rm zar}}
\newcommand{\an}{{\rm an}} 
\newcommand{\red}{{\rm red}}
\newcommand{\codim}{{\rm codim}}
\newcommand{\rank}{{\rm rank}}
\newcommand{\Ker}{{\rm Ker \,}}
\newcommand{\Pic}{{\rm Pic}}
\newcommand{\Div}{{\rm Div}}
\newcommand{\Hom}{{\rm Hom}}
\newcommand{\im}{{\rm im \,}}
\newcommand{\Spec}{{\rm Spec \,}}
\newcommand{\Sing}{{\rm Sing}}
\newcommand{\Char}{{\rm char}}
\newcommand{\Tr}{{\rm Tr}}
\newcommand{\Gal}{{\rm Gal}}
\newcommand{\Min}{{\rm Min \ }}
\newcommand{\Max}{{\rm Max \ }}
\newcommand{\CH}{{\rm CH}}
\newcommand{\pr}{{\rm pr}}
\newcommand{\cl}{{\rm cl}}
\newcommand{\gr}{{\rm Gr }}
\newcommand{\Coker}{{\rm Coker \,}}
\newcommand{\id}{{\rm id}}
\newcommand{\Rep}{{\bold {Rep} \,}}
\newcommand{\Aut}{{\rm Aut}}
\newcommand{\GL}{{\rm GL}}
\newcommand{\Bl}{{\rm Bl}}
\newcommand{\Jab}{{\rm Jab}}
\newcommand{\alb}{\rm Alb}
\newcommand{\NS}{\rm NS}
\newcommand{\sA}{{\mathcal A}}
\newcommand{\sB}{{\mathcal B}}
\newcommand{\sC}{{\mathcal C}}
\newcommand{\sD}{{\mathcal D}}
\newcommand{\sE}{{\mathcal E}}
\newcommand{\sF}{{\mathcal F}}
\newcommand{\sG}{{\mathcal G}}
\newcommand{\sH}{{\mathcal H}}
\newcommand{\sI}{{\mathcal I}}
\newcommand{\sJ}{{\mathcal J}}
\newcommand{\sK}{{\mathcal K}}
\newcommand{\sL}{{\mathcal L}}
\newcommand{\sM}{{\mathcal M}}
\newcommand{\sN}{{\mathcal N}}
\newcommand{\sO}{{\mathcal O}}
\newcommand{\sP}{{\mathcal P}}
\newcommand{\sQ}{{\mathcal Q}}
\newcommand{\sR}{{\mathcal R}}
\newcommand{\sS}{{\mathcal S}}
\newcommand{\sT}{{\mathcal T}}
\newcommand{\sU}{{\mathcal U}}
\newcommand{\sV}{{\mathcal V}}
\newcommand{\sW}{{\mathcal W}}
\newcommand{\sX}{{\mathcal X}}
\newcommand{\sY}{{\mathcal Y}}
\newcommand{\sZ}{{\mathcal Z}}
\newcommand{\A}{{\mathbb A}}
\newcommand{\B}{{\mathbb B}}
\newcommand{\C}{{\mathbb C}}
\newcommand{\D}{{\mathbb D}}
\newcommand{\E}{{\mathbb E}}
\newcommand{\F}{{\mathbb F}}
\newcommand{\G}{{\mathbb G}}
\renewcommand{\H}{{\mathbb H}}
\newcommand{\I}{{\mathbb I}}
\newcommand{\J}{{\mathbb J}}
\newcommand{\M}{{\mathbb M}}
\newcommand{\N}{{\mathbb N}}
\renewcommand{\P}{{\mathbb P}}
\newcommand{\Q}{{\mathbb Q}}
\newcommand{\R}{{\mathbb R}}
\newcommand{\T}{{\mathbb T}}
\newcommand{\V}{{\mathbb V}}
\newcommand{\W}{{\mathbb W}}
\newcommand{\X}{{\mathbb X}}
\newcommand{\Y}{{\mathbb Y}}
\newcommand{\Z}{{\mathbb Z}}
\newcommand{\Nwt}{{\rm Nwt}}
\newcommand{\Hdg}{{\rm Hdg}}
\newcommand{\ind}{{\rm ind \,}}
\newcommand{\Br}{{\rm Br}}
\newcommand{\inv}{{\rm inv}}
\newcommand{\Nm}{{\rm Nm}}
\newcommand{\Griff}{{\rm Griff}}
\newcommand{\Image}{\rm Im \,}
\newcommand{\Ev}{\rm Ev \,}
\title[Unramified cohomology]{On vanishing of unramified cohomology of geometrically rational varieties over finite fields}
\author{{Nguyen Le Dang Thi}}
\email{ le.nguyen@uni-due.de}
\date{31. October 2011}          
\subjclass{14C25, 14F20, 19E15}
\keywords{Unramified cohomology, Algebraic Cycles}
\begin{abstract}
The purpose of this note is to show that the third unramified cohomology $H^0_{Zar}(X,\sH^3(\Q_{\ell}/\Z_{\ell}(2)))$ of a smooth projective geometrically rational variety $X$ of dimension $3$ over a finite field $k=\F_q$ must vanish under $\Z_{\ell}$-exactness Hard Lefschetz condition. 
\end{abstract}
\maketitle
\section{Introduction}
Let $k$ be a field and $\ell \neq char(k) = p$ be any prime. Let $X$ be a smooth projective geometrically integral $k$-variety. Denote by $\sH^n_{\acute{e}t}(\Q_{\ell}/\Z_{\ell}(j))$ resp. $\sH^n_{\acute{e}t}(\mu_{\ell^m}^{\otimes j})$ the Zariski sheaf on $X$ associated to the presheaf $U \mapsto H^n_{\acute{e}t}(U,\Q_{\ell}/\Z_{\ell}(j))$ resp. $U \mapsto H^n_{\acute{e}t}(U,\mu_{\ell^m}^{\otimes j})$. If $F=k(X)$ is the function field of $X$, then we write $H^n_{nr}(F/k,\Q_{\ell}/\Z_{\ell}(j))$ resp. $H^n_{nr}(F/k,\mu_{\ell^m}^{\otimes j})$ for the unramified cohomology with $\ell$-divisible resp. finite coefficients. We denote by $\alpha: X_{\acute{e}t} \rightarrow X_{Zar}$ the obvious morphism. In fact, one has $\sH^n_{\acute{e}t}(A(j)) = R^n\alpha_*A(j)$. By a geometrically rational variety over a field $k$ we mean a smooth projective variety $X$ such that $\overline{X}=X\otimes_k\overline{k}$ is a rational variety. For a smooth projective variety $Y$ of dimension $n+1$ over $\overline{\F}_q$ with a smooth hyperplane section $Z$, we say that $Y$ satisfies $\Z_{\ell}$-exactness Hard Lefschetz condition if one has a direct decomposition 
\begin{equation}\label{eqHL}
H^n_{\acute{e}t}(Z,\Z_{\ell}) = H^n_{\acute{e}t}(Z,\Z_{\ell})_{ev} \oplus H^n_{\acute{e}t}(Y,\Z_{\ell})
\end{equation}
where we choose an isomorphism $\Z_{\ell} \simeq \Z_{\ell}(1)$ and forget about Tate-twist and $H^n_{\acute{e}t}(Z,\Z_{\ell})_{ev}$ denotes the space of vanishing cycles of $H^n_{\acute{e}t}(Z,\Z_{\ell})$.  
Our main result is the following theorem: 
\begin{thm}\label{mainthm}
Let $X$ be a smooth projective geometrically rational variety of dimension $3$ over a finite field $k=\F_q$ with function field $F=k(X)$ and $\ell \neq char(k)=p$ be a prime such that $\overline{X}$ satisfies the $\Z_{\ell}$-exactness Hard Lefschetz condition, then the third unramified cohomology $H^3_{nr}(F/k,\Q_{\ell}/\Z_{\ell}(2))$ is trivial. 
\end{thm}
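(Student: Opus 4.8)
The plan is to combine the Bloch--Ogus coniveau spectral sequence on $X$ with Galois descent from $\overline{X}$, invoking the $\Z_{\ell}$-exactness Hard Lefschetz condition precisely where the argument must pass from rational ($\Q_{\ell}$) to integral ($\Z_{\ell}$) information. Since $H^3_{nr}(F/k,\Q_{\ell}/\Z_{\ell}(2)) = H^0_{Zar}(X,\sH^3(\Q_{\ell}/\Z_{\ell}(2)))$, I would start from the coniveau spectral sequence
\begin{equation*}
E_2^{p,q} = H^p_{Zar}(X,\sH^q(\Q_{\ell}/\Z_{\ell}(2))) \Rightarrow H^{p+q}_{\acute{e}t}(X,\Q_{\ell}/\Z_{\ell}(2)).
\end{equation*}
By the Bloch--Ogus (Gersten) vanishing $H^p_{Zar}(X,\sH^q)=0$ for $p>q$, the targets $E_2^{3,1}$ and $E_2^{4,0}$ of the higher differentials leaving $E_2^{0,3}$ vanish, so the only possibly nonzero differential out of $E_2^{0,3}$ is $d_2\colon E_2^{0,3}\to E_2^{2,2}$. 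Using the identification $E_2^{2,2}=H^2_{Zar}(X,\sH^2(\Q_{\ell}/\Z_{\ell}(2)))\cong \CH^2(X)\otimes\Q_{\ell}/\Z_{\ell}$ (Bloch--Ogus together with Merkurjev--Suslin for the weight-two symbol), this yields an exact sequence
\begin{equation*}
0 \to E_\infty^{0,3} \to H^3_{nr}(F/k,\Q_{\ell}/\Z_{\ell}(2)) \xrightarrow{\ d_2\ } \CH^2(X)\otimes\Q_{\ell}/\Z_{\ell},
\end{equation*}
in which $E_\infty^{0,3}$ is the coniveau-zero graded piece of $H^3_{\acute{e}t}(X,\Q_{\ell}/\Z_{\ell}(2))$ and the image of $d_2$ is identified with the kernel of the codimension-two cycle class map to $H^4_{\acute{e}t}(X,\Q_{\ell}/\Z_{\ell}(2))$. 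It therefore suffices to show that both the subgroup $E_\infty^{0,3}$ and the image of $d_2$ vanish.

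For the first, I would run Hochschild--Serre for the projection $\overline{X}\to X$. Since $\mathrm{cd}(\F_q)=1$ it collapses to short exact sequences. Because $\overline{X}$ is rational, $H^2(\overline{X},\Z_{\ell})$ is spanned by divisor classes, so $H^2(\overline{X},\Q_{\ell}/\Z_{\ell}(2))$ carries geometric Frobenius $\varphi$ of weight $-2$; hence $\varphi-1$ is invertible over $\Q_{\ell}$ and $H^1(k,H^2(\overline{X},\Q_{\ell}/\Z_{\ell}(2)))=0$, giving $H^3_{\acute{e}t}(X,\Q_{\ell}/\Z_{\ell}(2))\cong H^0(k,H^3(\overline{X},\Q_{\ell}/\Z_{\ell}(2)))$. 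Rationality also shows that $H^3(\overline{X})$ is supported in codimension $\geq 1$, being carried by the curves appearing in a resolution of the birational map from $\overline{X}$ to $\P^3$. The vanishing of $E_\infty^{0,3}$ then amounts to this coniveau bound descending to $k$ without a torsion defect, and the role of the $\Z_{\ell}$-exactness Hard Lefschetz condition is exactly to force the integral divisor classes controlling these supports to be Galois-stable, so that $H^3_{\acute{e}t}(X,\Q_{\ell}/\Z_{\ell}(2))$ remains of coniveau $\geq 1$ and $E_\infty^{0,3}=0$.

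For the image of $d_2$, note it equals the kernel of $\CH^2(X)\otimes\Q_{\ell}/\Z_{\ell}\to H^4_{\acute{e}t}(X,\Q_{\ell}/\Z_{\ell}(2))$. Over $\overline{k}$, Hard Lefschetz provides an isomorphism $L\colon H^2(\overline{X},\Q_{\ell}(1))\xrightarrow{\ \sim\ }H^4(\overline{X},\Q_{\ell}(2))$; since $H^2$ is generated by divisors, $H^4(\overline{X},\Q_{\ell}(2))$ is spanned by classes of codimension-two cycles, so the cycle class map is surjective (the Tate conjecture in codimension two holds here), and a weight argument on $\varphi$ kills the rational kernel. The $\Z_{\ell}$-exactness condition upgrades $L$ to an isomorphism of $\Z_{\ell}$-lattices onto a direct summand, which forces the integral codimension-two cycle class map to be injective on the relevant part; Galois descent, again using $\mathrm{cd}(\F_q)=1$ and the weight $-1$ of $H^3(\overline{X},\Q_{\ell}(2))$, transports this injectivity to $X$ and kills the image of $d_2$. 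Combining the two vanishings yields $H^3_{nr}(F/k,\Q_{\ell}/\Z_{\ell}(2))=0$.

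The main obstacle is the integral bookkeeping in these two descent steps. Rationally everything is forced by the Weil conjectures, since the relevant geometric groups are pure of nonzero weight after the Tate twist and $\varphi-1$ is invertible, together with the fact that a geometrically rational threefold has Tate-type, cycle-generated cohomology. The difficulty is that $\Q_{\ell}/\Z_{\ell}$-coefficients detect the torsion in $H^{\ast}(\overline{X},\Z_{\ell})$ and the possible failure of the Lefschetz decomposition to split over $\Z_{\ell}$. The $\Z_{\ell}$-exactness Hard Lefschetz hypothesis is precisely the input that removes these torsion obstructions, and verifying that it yields the required integral splittings, as well as the integrality of the coniveau bound and of the cycle class injectivity, is the technical heart of the proof.
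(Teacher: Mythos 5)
Your reduction via the coniveau spectral sequence is sound as far as it goes: $E^{3,1}_2=E^{4,0}_2=0$ by Bloch--Ogus/Gersten, so $H^3_{nr}(F/k,\Q_{\ell}/\Z_{\ell}(2))$ is an extension of $\im d_2=\Ker\bigl(\CH^2(X)\otimes\Q_{\ell}/\Z_{\ell}\to H^4_{\acute{e}t}(X,\Q_{\ell}/\Z_{\ell}(2))\bigr)$ by $E^{0,3}_{\infty}$, and these are exactly the two groups the paper has to kill (it packages this step as Kahn's exact sequence, Theorem \ref{thm8}). The gaps are in how you propose to kill them. Most importantly, you never invoke the decomposition of the diagonal. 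Since $X$ is geometrically rational, $\CH_0(X_{\Omega})=\Z$, and Bloch--Srinivas gives $N\Delta_X=\Gamma_1+\Gamma_2$, whence $H^p_{nr}$ has finite exponent for $p>cd_{\ell}(\F_q)=1$ (Proposition \ref{prop4}). This is indispensable twice: it is what makes $cl^1_X\colon\CH^1(X)\otimes\Z_{\ell}\to H^2_{\acute{e}t}(X,\Z_{\ell}(1))$ surjective over $k$ itself (the cokernel is torsion-free by Proposition \ref{prop1} and of finite exponent, hence zero), which is the input that lets Hard Lefschetz produce codimension-two cycles over $k$; and it is what disposes of the divisible part at the end. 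Without it, your $\im d_2$ still contains the divisible group $\Ker(cl^2_X)\otimes\Q_{\ell}/\Z_{\ell}$, and no weight argument touches that: weights constrain Frobenius eigenvalues on cohomology, not the kernel of the cycle class map on Chow groups.

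Second, your two ``descent'' steps are asserted rather than proved, and they are where the actual work lies. The obstruction to descending surjectivity of the cycle map from $\overline{X}$ to $X$ is the group $H^1_{Gal}(\F_q,H^3_{\acute{e}t}(\overline{X},\Z_{\ell}(2)))$ in the Hochschild--Serre sequence for $H^4_{\acute{e}t}(X,\Z_{\ell}(2))$; the paper kills it by showing it is finite (Weil conjectures) while sitting inside the torsion-free group $H^4_{\acute{e}t}(X,\Z_{\ell}(2))$ --- and that torsion-freeness is itself a nontrivial step (Proposition \ref{prop10}), proved via $H^3_{\acute{e}t}(X,\Q_{\ell}/\Z_{\ell}(2))=0$ using the Artin--Mumford/Serre torsion-freeness of $H^3$ and $H^4$ of $\overline{X}$ and \'etale motivic cohomology. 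Your phrase about the $\Z_{\ell}$-exactness condition ``forcing the integral divisor classes to be Galois-stable'' does not engage with this obstruction group; the condition \ref{eqHL} is used only to make $-\cap\overline{H}\colon H^2_{\acute{e}t}(\overline{X},\Z_{\ell}(1))\to H^4_{\acute{e}t}(\overline{X},\Z_{\ell}(2))$ an integral isomorphism. Likewise, your coniveau claim that $H^3(\overline{X})$ is ``carried by the curves appearing in a resolution of the birational map to $\P^3$'' presupposes resolution of indeterminacy in characteristic $p$, which the paper deliberately avoids by routing everything through the diagonal decomposition. In short, the skeleton of your reduction matches the paper's, but the three load-bearing inputs --- the Bloch--Srinivas finite-exponent statement, the torsion-freeness of $H^4_{\acute{e}t}(X,\Z_{\ell}(2))$, and the Hochschild--Serre descent --- are missing or replaced by heuristics.
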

The $\Z_{\ell}$-exactness Hard Lefschetz condition is in fact the question in \cite[Ques. 5.7]{CTK11}, which we certainly can not answer in this note. 
\section{Proof of theorem \ref{mainthm}}
In this section we prove the main theorem \ref{mainthm} through several steps. First of all we show 
\begin{prop}\label{prop1} 
Let $X$ be a smooth projective geometrically integral variety over a field $k$ of characteristic $char(k) \geq 0$ and $\ell \neq char(k)$ be a prime. Then one has an exact sequence 
\begin{equation}\label{eq2}
0 \rightarrow \CH^1(X)\otimes \Z_{\ell} \rightarrow H^2_{\acute{e}t}(X,\Z_{\ell}(2)) \rightarrow \underset{n}{\varprojlim}H^2_{nr}(F/k,\mu_{\ell^n}) 
\end{equation}
Moreover, the group $\underset{n}{\varprojlim}H^2_{nr}(F/k,\mu_{\ell^n})$ is torsion-free.  
\end{prop}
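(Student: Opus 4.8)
The plan is to read the sequence off the coniveau (Bloch--Ogus) spectral sequence in low degrees and then pass to the inverse limit, the final torsion-freeness being an identification of that limit with a Tate module. Throughout I work with $\mu_{\ell^n}$-coefficients and use the convention $\Z_\ell\simeq\Z_\ell(1)$ of the introduction, so that $H^2_{\acute{e}t}(X,\Z_\ell(1))$ and $H^2_{\acute{e}t}(X,\Z_\ell(2))$ are identified.

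First I would write down, for each $n$, the coniveau spectral sequence
\[
E_2^{p,q}=H^p_{Zar}(X,\sH^q(\mu_{\ell^n}))\Longrightarrow H^{p+q}_{\acute{e}t}(X,\mu_{\ell^n}),
\]
and invoke the Bloch--Ogus vanishing $E_2^{p,q}=0$ for $p>q$, together with the fact that $\sH^0(\mu_{\ell^n})$ is a constant sheaf on the irreducible space $X$, so that $E_2^{p,0}=0$ for $p>0$. In total degree $2$ only $E_2^{0,2}=H^2_{nr}(F/k,\mu_{\ell^n})$ and $E_2^{1,1}=H^1_{Zar}(X,\sH^1(\mu_{\ell^n}))$ can be nonzero, and every differential into or out of these two spots lands in a vanishing group; hence $E_\infty^{0,2}=E_2^{0,2}$ and $E_\infty^{1,1}=E_2^{1,1}$. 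Reading off the two-step filtration on $H^2_{\acute{e}t}(X,\mu_{\ell^n})$, and using the Kummer identification $\sH^1(\mu_{\ell^n})\cong\sO_X^*/\ell^n$, which gives $H^1_{Zar}(X,\sH^1(\mu_{\ell^n}))\cong\Pic(X)/\ell^n=\CH^1(X)/\ell^n$, yields the short exact sequence
\[
0\to\CH^1(X)/\ell^n\to H^2_{\acute{e}t}(X,\mu_{\ell^n})\to H^2_{nr}(F/k,\mu_{\ell^n})\to 0.
\]

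Next I would apply $\varprojlim_n$ to this system. Since $\varprojlim$ is left exact this immediately produces the three-term exact sequence of the proposition, with $\varprojlim_n H^2_{\acute{e}t}(X,\mu_{\ell^n})=H^2_{\acute{e}t}(X,\Z_\ell(2))$ by definition of $\ell$-adic cohomology. It then remains to identify the left-hand limit: the transition maps on $\CH^1(X)/\ell^n$ are the reduction maps, so $\varprojlim_n\CH^1(X)/\ell^n$ is the $\ell$-adic completion of $\CH^1(X)=\Pic(X)$, and this agrees with $\CH^1(X)\otimes\Z_\ell$ once $\Pic(X)$ is finitely generated (which holds over a finite field, the case of interest, and makes the completion map injective).

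Finally, for the torsion-freeness I would identify the last term with a Tate module. Running the Kummer sequence $0\to\mu_{\ell^n}\to\G_m\to\G_m\to 0$ alongside the coniveau description shows $H^2_{nr}(F/k,\mu_{\ell^n})\cong\Br(X)[\ell^n]$, the subgroup $\Pic(X)/\ell^n$ matching in both pictures. The delicate point, and the main obstacle, is to pin down the transition maps of this tower: comparing the Kummer sequences for $\ell^{n+1}$ and $\ell^n$ and applying the naturality of the connecting homomorphism to the correct (inclusion-induced) edge map, one finds that $\Br(X)[\ell^{n+1}]\to\Br(X)[\ell^n]$ is multiplication by $\ell$ — rather than a reduction or the identity, which is exactly what forces torsion-freeness. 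Consequently $\varprojlim_n H^2_{nr}(F/k,\mu_{\ell^n})\cong T_{\ell}(\Br(X))$, and the Tate module $\varprojlim_n(A[\ell^n],\times\ell)$ of any abelian group $A$ is torsion-free, which gives the claim.
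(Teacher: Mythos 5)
Your argument is correct and arrives at the same finite-level short exact sequence as the paper, but by a genuinely different route, so it is worth comparing the two. The paper produces $0 \to \CH^1(X)\otimes \Z/\ell^n \to H^2_{\acute{e}t}(X,\mu_{\ell^n}) \to H^0_{Zar}(X,\sH^2(\mu_{\ell^n})) \to 0$ from Voevodsky's Kummer-theoretic distinguished triangle $\Z/\ell^n(1) \to R\alpha_*\alpha^*\Z/\ell^n(1) \to \tau_{\geq 2}R\alpha_*\alpha^*\Z/\ell^n(1)$ together with the hypercohomology spectral sequence of the truncation; you obtain it from the Bloch--Ogus coniveau spectral sequence, the vanishing $E_2^{p,q}=0$ for $p>q$, and the identification $H^1_{Zar}(X,\sH^1(\mu_{\ell^n}))\cong \Pic(X)/\ell^n$ (for which the Gersten resolution of $\sH^1$ is a cleaner justification than the sheaf identity $\sH^1\cong \sO_X^*/\ell^n$, which requires some care as a Zariski sheaf quotient). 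In degree $2$ the two filtrations coincide, so the outputs agree. For the torsion-freeness, the paper embeds $\varprojlim_n H^2_{nr}(F/k,\mu_{\ell^n})$ into $H^2_{\acute{e}t}(F,\Z_{\ell}(1))$, i.e.\ into the Tate module of $\Br(F)$, whereas you identify the limit with $\varprojlim_n \Br(X)[\ell^n]$ after checking that the transition maps are multiplication by $\ell$; this is the same Tate-module mechanism applied to $\Br(X)$ instead of $\Br(F)$, and your computation of the transition maps is right. One point you handle more carefully than the paper: passing to the inverse limit naturally yields $\varprojlim_n \CH^1(X)/\ell^n$, and replacing this by $\CH^1(X)\otimes\Z_{\ell}$ with an injective comparison map genuinely requires $\Pic(X)$ to be finitely generated --- true in the paper's intended applications over $\F_q$, but not for an arbitrary field as the proposition is stated; the paper silently makes this substitution, while you correctly flag the needed hypothesis.
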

\begin{proof}
By Kummer theory one has a distinguished triangle, see \cite[Thm. 6.6]{Voe03}
\begin{equation*}
\Z/\ell^n(1) \rightarrow R\alpha_*\alpha^*\Z/\ell^n(1) \rightarrow \tau_{\geq 2} R\alpha_*\alpha^*\Z/\ell^n(1) \stackrel{+1}{\longrightarrow}
\end{equation*}
By taking cohomology we have an exact sequence 
\begin{equation*}
0 \rightarrow \CH^1(X)\otimes \Z/\ell^n \rightarrow H^2_{\acute{e}t}(X,\mu_{\ell^n}) \rightarrow \H^2_{Zar}(X,\tau_{\geq 2}R\alpha_*\alpha^*\Z/\ell^n(1)) \rightarrow 0
\end{equation*}
One has a spectral sequence \cite[Thm. 0.3]{SV00}
\begin{equation*}\label{eq3}
E^{p,q}_2 = H^p_{Zar}(X,\underline{H}^q(\tau_{\geq 2}R\alpha_*\alpha^*\Z/\ell^n(1))) \Rightarrow \H^{p+q}_{Zar}(X,\tau_{\geq 2}R\alpha_*\alpha^*\Z/\ell^n(1)), 
\end{equation*}
where $\underline{H}^q$ denote the cohomology sheaves. By the exact sequence for terms of lower degree one has an injection 
\begin{equation*}
0 \rightarrow \H^2_{Zar}(X,\tau_{\geq 2 }R\alpha_*\alpha^*\Z/\ell^n(1)) \rightarrow H^2_{nr}(F/k,\mu_{\ell^n})
\end{equation*}
Since there is no differentials for $E^{0,2}_r$, we have $H^2_{nr}(F/k,\mu_{\ell^n}) \stackrel{def}{=} E^{0,2}_2 = E^{0,2}_{\infty}$. So the injection above is in fact an isomorphism, so it gives us the exact sequence \ref{eq2}. Now by definition we have $\underset{n}{\varprojlim}H^2_{nr}(F/k,\mu_{\ell^n}) \subset H^2_{\acute{e}t}(F,\Z_{\ell}(1))$. The last group is torsion-free by Kummer theory, so we are done.  
\end{proof}
\begin{prop}\label{prop4}
Let $X$ be a smooth projective geometrically integral variety of dimension $d$ over a field $k$ with the function field $F=k(X)$. Let $k \subset \Omega$ be a universal domain in sense of Weil. Assume $\CH_0(X_{\Omega}) = \Z$, then $H^p_{nr}(F/k,\mu_{\ell^n}^{\otimes j})$ are killed by an integer $N \geq 1$, for all $p>cd_{\ell}(k)$.  
\end{prop}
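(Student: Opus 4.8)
The plan is to deduce the statement from a Bloch--Srinivas decomposition of the diagonal, combined with the functoriality of unramified cohomology under correspondences. First I would place the unramified cohomology inside the cycle-module formalism: by the theory of Bloch--Ogus--Gabber the Zariski sheaf $\sH^p(\mu_{\ell^n}^{\otimes j})$ admits a Gersten resolution, so that
$H^p_{nr}(F/k,\mu_{\ell^n}^{\otimes j}) = H^0_{Zar}(X,\sH^p(\mu_{\ell^n}^{\otimes j})) = A^0(X,M_p)$,
where $M_p(E) = H^p_{\acute{e}t}(E,\mu_{\ell^n}^{\otimes j})$ is regarded as a cycle module in the sense of Rost. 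In this language a correspondence $\Gamma \in \CH_d(X\times X)$ acts on $A^0(X,M_p)$ by $\Gamma_*\alpha = (\pr_2)_*\big((\pr_1)^*\alpha \cdot [\Gamma]\big)$, the diagonal $\Delta$ acting as the identity; here one uses that $X$ is smooth projective, so that pullback, intersection with $[\Gamma]$ and proper pushforward are all defined and shift the dimension and weight indices in the expected way.

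Next I would produce the decomposition of the diagonal. The hypothesis $\CH_0(X_{\Omega}) = \Z$ says exactly that the degree map is an isomorphism over the universal domain. Restricting the diagonal to the generic fibre of the second projection $\pr_2\colon X\times X \to X$ identifies its class with the diagonal point $\delta \in \CH_0(X_F)$, $F = k(X)$, which has degree one. A \emph{descent} argument, pushing a rational equivalence down from $\Omega$ to the finitely generated field $F$ (which is permissible after multiplying by a suitable integer), shows that $N\delta$ equals the class of a constant zero-cycle of degree $N$ for some $N\geq 1$. Spreading this equality out over the first factor by the localization sequence for Chow groups yields, in $\CH_d(X\times X)$, a decomposition $N\Delta = \gamma\times X + Z$, where $\gamma$ is a zero-cycle on $X$ and $Z$ is supported on $X\times D$ for some closed $D \subsetneq X$ with $\dim D \leq d-1$. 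Crucially, the integer $N$ depends only on $X$, not on $p$, $n$ or $j$.

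I would then evaluate both correspondences on a class $\alpha \in A^0(X,M_p)$. For $Z$, the element $(\pr_1)^*\alpha\cdot[Z]$ lies in $A_d(X\times X,M_p)$ and is supported on $\mathrm{supp}(Z)$; since $\pr_2$ carries $\mathrm{supp}(Z)$ into $D$, its pushforward factors through $A_d(D,M_p)$, which vanishes because $\dim D < d$. Hence $Z_*\alpha = 0$. For $\gamma\times X = \sum_i n_i (P_i\times X)$, each projection $P_i\times X = X_{k(P_i)} \to X$ is finite, and $(\pr_1)^*\alpha$ restricted to $P_i\times X$ is the constant class attached to the value $\alpha(P_i) \in M_p(k(P_i)) = H^p_{\acute{e}t}(k(P_i),\mu_{\ell^n}^{\otimes j})$; thus $(\gamma\times X)_*\alpha$ factors through $\bigoplus_i H^p_{\acute{e}t}(k(P_i),\mu_{\ell^n}^{\otimes j})$. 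As each $k(P_i)$ is finite over $k$ one has $cd_{\ell}(k(P_i)) \leq cd_{\ell}(k) < p$, so these groups vanish and $(\gamma\times X)_*\alpha = 0$. Combining, $N\alpha = N\Delta_*\alpha = (\gamma\times X)_*\alpha + Z_*\alpha = 0$, which is the assertion, with the single integer $N$ produced above working uniformly in $p$, $n$ and $j$.

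The main obstacle I anticipate is not the vanishing itself but the bookkeeping needed to make the correspondence action rigorous: one must verify that pullback, intersection with $[\Gamma]$ and proper pushforward are well defined on the cycle module $M_{\bullet}$ and track the dimension and weight indices through each step, and one must justify the descent of the diagonal rational equivalence from $\Omega$ to $F$ together with its spreading out to the stated integral decomposition on $X\times X$. Once these standard but delicate points are secured, the two vanishing computations — a pure dimension count for $Z$ and the cohomological-dimension bound for the constant part — are immediate.
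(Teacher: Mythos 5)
Your proposal is correct and follows essentially the same route as the paper: a Bloch--Srinivas decomposition $N\Delta = \xi\times X + (\text{cycle supported on } X\times D)$, with the first correspondence killed because it factors through $H^p_{\acute{e}t}(k(P_i),\mu_{\ell^n}^{\otimes j})=0$ for $p>cd_{\ell}(k)$ and the second killed for support/dimension reasons. The only cosmetic difference is that you justify the correspondence action via Rost's cycle-module formalism and sketch the descent/spreading-out step, where the paper simply cites \cite{BS83} and the appendix of \cite{CTV10}.
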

\begin{proof}
The assumption that $\CH_0(X_{\Omega}) = \Z$ implies the diagonal decomposition in $\CH^d(X\times X)$ (see \cite{BS83})
\begin{equation*}
N\Delta_X = \Gamma_1 + \Gamma_2,
\end{equation*}
where $\Gamma_1$ is supported on $\xi \times X$ with $\xi$ is a $0$-dimensional subscheme, $\Gamma_2$ is supported on $X \times D$ for a divisor $D\subset X$ and $N \in \N^{\times}$ is an integer. By action of correspondences, see e.g. \cite[App.]{CTV10}, we obtain 
\begin{equation*}
NId = \Gamma_{1_*} + \Gamma_{2_*}: H^0_{Zar}(X,\sH^p_{\acute{e}t}(\mu_{\ell^n}^{\otimes j})) \rightarrow H^0_{Zar}(X,\sH^p_{\acute{e}t}(\mu_{\ell^n}^{\otimes j})).
\end{equation*}
One has that $\Gamma_{1_*}$ factors through 
\begin{equation*}
H^0_{Zar}(X,\sH^p_{\acute{e}t}(\mu_{\ell^n}^{\otimes j})) \rightarrow H^0_{Zar}(\xi,\sH^p_{\acute{e}t}(\mu_{\ell^n}^{\otimes j})), 
\end{equation*} 
where we can assume $\xi$ is a closed point and so $H^0_{Zar}(\xi,\sH^p_{\acute{e}t}(\mu_{\ell^n}^{\otimes j}))$ is trivial for $p > cd_{\ell}(k)$. One has that $\Gamma_{2_*} = 0$, since $\Gamma_{2_*}$ is supported on $D \subsetneq X$. 
This shows that $H^0_{Zar}(X,\sH^p_{\acute{e}t}(\mu_{\ell^n}^{\otimes j}))$ are killed by an integer $N \geq 1$ for all $p>cd_{\ell}(k)$.
\end{proof}
\begin{prop}\label{prop10} 
Let $X$ be a smooth projective geometrically rational variety of dimension $3$ over a finite field $\F_q$, then $H^3_{\acute{e}t}(X,\Q_{\ell}/\Z_{\ell}(2)) = 0$. In particular, $H^4_{\acute{e}t}(X,\Z_{\ell}(2))$ is torsion-free. 
\end{prop}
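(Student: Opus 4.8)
The plan is to compare $H^3_{\acute{e}t}(X,\Q_{\ell}/\Z_{\ell}(2))$ with the geometric cohomology of $\overline{X}=X\otimes_{\F_q}\overline{\F}_q$ via the Hochschild--Serre spectral sequence for the pro-Galois covering $\overline{X}\to X$, whose group is $G=\Gal(\overline{\F}_q/\F_q)\simeq\widehat{\Z}$. Since $cd_{\ell}(\F_q)=1$, this degenerates to short exact sequences, and in total degree $3$ it reads
\[
0 \rightarrow H^1(G,H^2_{\acute{e}t}(\overline{X},\Q_{\ell}/\Z_{\ell}(2))) \rightarrow H^3_{\acute{e}t}(X,\Q_{\ell}/\Z_{\ell}(2)) \rightarrow H^0(G,H^3_{\acute{e}t}(\overline{X},\Q_{\ell}/\Z_{\ell}(2))) \rightarrow 0 .
\]
So it suffices to kill the two outer terms. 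Here I would use that $\overline{X}$ is rational, so its integral cohomology $H^i_{\acute{e}t}(\overline{X},\Z_{\ell})$ is of Tate type and torsion-free; consequently $H^i_{\acute{e}t}(\overline{X},\Q_{\ell}/\Z_{\ell}(2))\simeq H^i_{\acute{e}t}(\overline{X},\Z_{\ell}(2))\otimes\Q_{\ell}/\Z_{\ell}$ is a divisible, $\ell$-cofinitely generated $G$-module on which $G$ acts through the geometric Frobenius $F$.

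The left-hand term should vanish by weights. The space $H^2_{\acute{e}t}(\overline{X},\Q_{\ell}(2))$ is pure of weight $2-4=-2$, so $F$ has no eigenvalue equal to $1$ and $F-1$ is an automorphism of this finite-dimensional $\Q_{\ell}$-space. Applying the snake lemma to $F-1$ acting on the coefficient sequence $0\to H^2_{\acute{e}t}(\overline{X},\Z_{\ell}(2))\to H^2_{\acute{e}t}(\overline{X},\Q_{\ell}(2))\to H^2_{\acute{e}t}(\overline{X},\Q_{\ell}/\Z_{\ell}(2))\to 0$ then shows $F-1$ is surjective on the divisible module, i.e.\ $H^1(G,H^2_{\acute{e}t}(\overline{X},\Q_{\ell}/\Z_{\ell}(2)))=\mathrm{coker}(F-1)=0$.

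Before attacking the right-hand term I would record a clean consequence of the same weight estimate. Both $H^2_{\acute{e}t}(\overline{X},\Q_{\ell}(2))$ and $H^3_{\acute{e}t}(\overline{X},\Q_{\ell}(2))$, of weights $-2$ and $-1$, have no $F$-invariants and no $F$-coinvariants, so the Hochschild--Serre sequence with $\Q_{\ell}$-coefficients already gives $H^3_{\acute{e}t}(X,\Q_{\ell}(2))=0$. Feeding this into the coefficient sequence for $0\to\Z_{\ell}(2)\to\Q_{\ell}(2)\to\Q_{\ell}/\Z_{\ell}(2)\to 0$ identifies $H^3_{\acute{e}t}(X,\Q_{\ell}/\Z_{\ell}(2))$ with the torsion subgroup of $H^4_{\acute{e}t}(X,\Z_{\ell}(2))$. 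Thus the ``in particular'' clause is not an independent statement but is equivalent to the main vanishing, and it is precisely this torsion that remains to be annihilated.

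Everything therefore reduces to the right-hand term $H^0(G,H^3_{\acute{e}t}(\overline{X},\Q_{\ell}/\Z_{\ell}(2)))$, and this is the step I expect to be the real obstacle. By the snake lemma it is isomorphic to the finite group $H^3_{\acute{e}t}(\overline{X},\Z_{\ell}(2))/(F-1)$, whose order is the $\ell$-part of $\det(1-F\mid H^3_{\acute{e}t}(\overline{X},\Z_{\ell}(2)))$; the bare weight bound only forces this determinant to be nonzero, not an $\ell$-adic unit, so a finer input is required. Here I would push the rationality harder: since $\CH_0(\overline{X})=\Z$, the Bloch--Srinivas decomposition of the diagonal already used in Proposition \ref{prop4} yields $N\,\mathrm{id}=\Gamma_{1_*}+\Gamma_{2_*}$ on $H^3_{\acute{e}t}(\overline{X},\Q_{\ell}/\Z_{\ell}(2))$, where $\Gamma_{1_*}$ factors through the vanishing degree-$3$ cohomology of a point and $\Gamma_{2_*}$ factors through the cohomology of a divisor $D\subsetneq\overline{X}$ by a Gysin map. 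This exhibits $H^3_{\acute{e}t}(\overline{X},\Q_{\ell}/\Z_{\ell}(2))$ as entirely of coniveau $\geq 1$, and the remaining task is to show that no such divisorial class survives $F$-invariantly. Controlling this $F$-fixed part of the coniveau-one cohomology of $\overline{X}$ is the crux of the proof and the one point where the rationality hypothesis must be made to do genuine work.
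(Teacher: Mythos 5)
Your reduction is set up correctly: the Hochschild--Serre sequence does degenerate as you write; your weight argument killing $H^1(G,H^2_{\acute{e}t}(\overline{X},\Q_{\ell}/\Z_{\ell}(2)))$ is sound (the paper gets the same vanishing by observing that this group is divisible while $H^3_{\acute{e}t}(X,\Q_{\ell}/\Z_{\ell}(2))$ is finite); and your identification of the remaining obstruction with the finite group $H^3_{\acute{e}t}(\overline{X},\Z_{\ell}(2))/(F-1)$, of order the $\ell$-part of $\det(1-F\mid H^3_{\acute{e}t}(\overline{X},\Z_{\ell}(2)))$, is exactly right. (One aside: $H^3_{\acute{e}t}(\overline{X},\Q_{\ell})$ is \emph{not} of Tate type for a general rational threefold --- it is the twisted $H^1$ of the curves blown up --- but you never actually use that claim.) The gap is that you do not prove this last group vanishes, and the coniveau-$1$ observation via Bloch--Srinivas cannot close it: $H^3$ of a rational threefold is already entirely of coniveau $1$, and that says nothing about the $\ell$-part of the determinant.

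In fact the gap cannot be closed, and your analysis pinpoints why. Take $X=\Bl_E\P^3$ for a smooth elliptic curve $E\subset\P^3_{\F_q}$ with $\ell\mid\#E(\F_q)$. Then $X$ is rational over $\F_q$, $H^3_{\acute{e}t}(\overline{X},\Z_{\ell}(2))\cong T_{\ell}E$, and your obstruction group is $T_{\ell}E/(F-1)\cong E(\F_q)[\ell^{\infty}]\neq 0$; equivalently, by the blow-up formula $H^4_{\acute{e}t}(X,\Z_{\ell}(2))_{tors}\cong H^2_{\acute{e}t}(E,\Z_{\ell}(1))_{tors}\cong E(\F_q)[\ell^{\infty}]$. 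So the proposition as stated fails. The paper's own proof diverges from yours precisely at this point: it claims the much stronger vanishing $H^3_{\acute{e}t}(\overline{X},\Q_{\ell}/\Z_{\ell}(2))=0$, deduced from an asserted injectivity of $\CH^2(\overline{X})\otimes\Z_{\ell}\to H^4_{\acute{e}t}(\overline{X},\Z_{\ell}(2))$ via Bloch--Ogus together with results of Kahn on \'etale motivic cohomology. But for a blow-up along a genus-$g$ curve that group equals $(\Q_{\ell}/\Z_{\ell})^{2g}$, and the asserted injectivity fails because the relevant $E_2^{2,2}$-term of the $\Z_{\ell}(2)$-coefficient Bloch--Ogus spectral sequence is (at best) the $\ell$-adic completion $\varprojlim\CH^2(\overline{X})/\ell^n$ rather than $\CH^2(\overline{X})\otimes\Z_{\ell}$: the divisible torsion summand $\Pic^0(\overline{E})\otimes\Z_{\ell}$ is invisible to the completion and is necessarily killed by the cycle map into a torsion-free target. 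Your instinct that this is ``the one point where rationality must do genuine work'' is correct --- and no amount of work will make it do so.
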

\begin{proof}
Let $G = \Gal(\overline{\F_q}/\F_q)$ be the absolute Galois group of $\F_q$. From the Hochschild-Serre spectral sequence 
\begin{equation*}
E^{a,b}_2 = H^a_{Gal}(\F_q,H^b_{\acute{e}t}(\overline{X},\Q_{\ell}/\Z_{\ell}(2))) \Rightarrow H^{a+b}_{\acute{e}t}(X,\Q_{\ell}/\Z_{\ell}(2))
\end{equation*}
one has a short exact sequence 
\begin{equation*}
0 \rightarrow H^1_{Gal}(\F_q,H^2_{\acute{e}t}(\overline{X},\Q_{\ell}/\Z_{\ell}(2))) \rightarrow H^3_{\acute{e}t}(X,\Q_{\ell}/\Z_{\ell}(2)) \rightarrow H^3_{\acute{e}t}(\overline{X},\Q_{\ell}/\Z_{\ell}(2))^G \rightarrow 0
\end{equation*}
From the universal coefficient exact sequence 
\begin{equation}\label{equc}
0 \rightarrow H^2_{\acute{e}t}(\overline{X},\Z_{\ell}(2))\otimes \Q_{\ell}/\Z_{\ell} \rightarrow H^2_{\acute{e}t}(\overline{X},\Q_{\ell}/\Z_{\ell}(2)) \rightarrow H^3_{\acute{e}t}(\overline{X},\Z_{\ell}(2))_{tors} \rightarrow 0
\end{equation} 
and from the fact by Serre, see e.g. \cite{A-M}, that $H^3_{\acute{e}t}(\overline{X},\Z_{\ell}(2))$ is torsion-free, we see that $H^2_{\acute{e}t}(\overline{X},\Q_{\ell}/\Z_{\ell}(2))$ is divisible. Since $\F_q$ has cohomological dimension $1$, it implies $H^1_{Gal}(\F_q,H^2_{\acute{e}t}(\overline{X},\Q_{\ell}/\Z_{\ell}(2)))$ is also divisible. By Weil conjecture, see e.g. \cite{CTSS83}, the group $H^3_{\acute{e}t}(X,\Q_{\ell}/\Z_{\ell}(2))$ is finite, so $H^1_{Gal}(\F_q,H^2_{\acute{e}t}(\overline{X},\Q_{\ell}/\Z_{\ell}(2)))$ is trivial and we must have 
\begin{equation*}
H^3_{\acute{e}t}(X,\Q_{\ell}/\Z_{\ell}(2)) \cong H^3_{\acute{e}t}(\overline{X},\Q_{\ell}/\Z_{\ell}(2))^G
\end{equation*}  
So it is enough to show that $H^3_{\acute{e}t}(\overline{X},\Q_{\ell}/\Z_{\ell}(2)) = 0$. By universal coefficient exact sequence 
\begin{equation*}
0 \rightarrow H^3_{\acute{e}t}(\overline{X},\Z_{\ell}(2)) \otimes \Q_{\ell}/\Z_{\ell} \rightarrow H^3_{\acute{e}t}(\overline{X},\Q_{\ell}/\Z_{\ell}(2)) \rightarrow H^4_{\acute{e}t}(\overline{X},\Z_{\ell}(2))_{tors} \rightarrow 0, 
\end{equation*}
and the fact by Serre that $H^4_{\acute{e}t}(\overline{X},\Z_{\ell}(2))$ is torsion-free, we conclude that 
$$H^3_{\acute{e}t}(\overline{X},\Q_{\ell}/\Z_{\ell}(2)) = H^3_{\acute{e}t}(\overline{X},\Z_{\ell}(2)) \otimes \Q_{\ell}/\Z_{\ell}.$$
The last group is by \cite[Cor. 4.20]{Kah11} isomorphic to the torsion subgroup of the kernel of the map 
$$\H^4_{\acute{e}t}(\overline{X},\Z(2))\otimes \Z_{\ell} \rightarrow H^4_{\acute{e}t}(\overline{X},\Z_{\ell}(2)),$$
where we denote by $\H^n_{\acute{e}t}(-,\Z(j))$ the \'etale motivic cohomology. Consider the cycle class map  
\begin{equation}\label{eqcycl}
cl^2_{\overline{X}}: \CH^2(\overline{X})\otimes \Z_{\ell} \rightarrow \H^4_{\acute{e}t}(\overline{X},\Z(2))\otimes \Z_{\ell} \rightarrow H^4_{\acute{e}t}(\overline{X},\Z_{\ell}(2))
\end{equation} 
From the Bloch-Ogus spectral sequence \cite{BO74} 
\begin{equation*}
E^{i,j}_2 = H^i_{Zar}(\overline{X},\sH^j_{\acute{e}t}(\Z_{\ell}(2))) \Rightarrow H^{i+j}_{\acute{e}t}(\overline{X},\Z_{\ell}(2))
\end{equation*}
one has an exact sequence 
\begin{multline*}
0 \rightarrow N^1H^3_{\acute{e}t}(\overline{X},\Z_{\ell}(2)) \rightarrow H^3_{\acute{e}t}(\overline{X},\Z_{\ell}(2)) \rightarrow H^0_{Zar}(\overline{X},\sH^3_{\acute{e}t}(\Z_{\ell}(2))) \rightarrow \\ \rightarrow \CH^2(\overline{X})\otimes \Z_{\ell} \stackrel{cl^2_{\overline{X}}}{\longrightarrow} H^4_{\acute{e}t}(\overline{X},\Z_{\ell}(2)), 
\end{multline*}
where $N^1$ is the first step coniveau filtration. Since $H^0_{Zar}(\overline{X},\sH^3_{\acute{e}t}(\Z_{\ell}(2))) = 0$, we get the injectivity of $cl^2_{\overline{X}}$. Moreover, from \cite[Prop. 2.8]{Kah11} one has an exact sequence 
\begin{equation*}
0 \rightarrow \CH^2(\overline{X})\otimes \Z_{\ell} \rightarrow \H^4_{\acute{e}t}(\overline{X},\Z(2))\otimes \Z_{\ell} \rightarrow H^0_{Zar}(\overline{X},\sH^3_{\acute{e}t}(\Q_{\ell}/\Z_{\ell}(2))) \rightarrow 0
\end{equation*}
Since $H^0_{Zar}(\overline{X},\sH^3_{\acute{e}t}(\Q_{\ell}/\Z_{\ell}(2))) = 0$, we have an isomorphism 
$$\CH^2(\overline{X})\otimes \Z_{\ell} \cong \H^4_{\acute{e}t}(\overline{X},\Z(2))\otimes \Z_{\ell}.$$  
Apply now the Kernel-Cokernel exact sequence for the composition \ref{eqcycl}, we can conclude that $\H^4_{\acute{e}t}(\overline{X},\Z(2))\otimes \Z_{\ell}$ maps injectively to $H^4_{\acute{e}t}(\overline{X},\Z_{\ell}(2))$. So $H^3_{\acute{e}t}(\overline{X},\Q_{\ell}/\Z_{\ell}(2))$ is trivial. Now from the exact sequence 
\begin{equation*}
\cdots \rightarrow H^3_{\acute{e}t}(X,\Q_{\ell}/\Z_{\ell}(2)) \rightarrow H^4_{\acute{e}t}(X,\Z_{\ell}(2)) \rightarrow H^4_{\acute{e}t}(X,\Q_{\ell}) \rightarrow \cdots 
\end{equation*}
we see that $H^4_{\acute{e}t}(X,\Z_{\ell}(2))$ is torsion-free. 
\end{proof}
\begin{prop}\label{prop5}
Let $X$ be a smooth projective geometrically rational variety of dimension $3$ over a finite field $\F_q$. Assume that $\overline{X}$ satisfies the condition \ref{eqHL}, then the cycle class map 
\begin{equation*}
cl_{X}^2: \CH^2(X)\otimes \Z_{\ell} \rightarrow H^4_{\acute{e}t}(X,\Z_{\ell}(2))
\end{equation*}
is surjective. 
\end{prop}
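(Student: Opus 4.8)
The plan is to show that $H^4_{\acute{e}t}(X,\Z_{\ell}(2))$ is spanned by products of divisor classes and then to conclude by compatibility of the cycle class map with intersection products.

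First I would descend to $\overline{X}$ via the Hochschild-Serre spectral sequence
\begin{equation*}
E^{a,b}_2 = H^a_{Gal}(\F_q, H^b_{\acute{e}t}(\overline{X},\Z_{\ell}(2))) \Rightarrow H^{a+b}_{\acute{e}t}(X,\Z_{\ell}(2)).
\end{equation*}
Since $\F_q$ has cohomological dimension $1$ there are no higher differentials, and in degree $4$ one obtains
\begin{equation*}
0 \rightarrow H^1_{Gal}(\F_q, H^3_{\acute{e}t}(\overline{X},\Z_{\ell}(2))) \rightarrow H^4_{\acute{e}t}(X,\Z_{\ell}(2)) \rightarrow H^4_{\acute{e}t}(\overline{X},\Z_{\ell}(2))^G \rightarrow 0.
\end{equation*}
Now $H^3_{\acute{e}t}(\overline{X},\Z_{\ell}(2))$ is pure of weight $3-2\cdot 2 = -1$, so geometric Frobenius $F$ has no eigenvalue $1$ and the coinvariants $H^1_{Gal}(\F_q,H^3_{\acute{e}t}(\overline{X},\Z_{\ell}(2))) = \Coker(1-F)$ are finite; being a subgroup of the torsion-free group $H^4_{\acute{e}t}(X,\Z_{\ell}(2))$ of Proposition \ref{prop10}, they vanish. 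Hence $H^4_{\acute{e}t}(X,\Z_{\ell}(2)) \stackrel{\sim}{\longrightarrow} H^4_{\acute{e}t}(\overline{X},\Z_{\ell}(2))^G$, and the same argument, using $H^1_{\acute{e}t}(\overline{X},\Z_{\ell})=0$, gives $H^2_{\acute{e}t}(X,\Z_{\ell}(1)) \stackrel{\sim}{\longrightarrow} H^2_{\acute{e}t}(\overline{X},\Z_{\ell}(1))^G$.

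The heart of the argument, and the step I expect to be hardest, is to use the Hard Lefschetz hypothesis to show that cup product with the hyperplane class is an \emph{integral} isomorphism. Let $i\colon \overline{Z}\hookrightarrow \overline{X}$ be the smooth hyperplane section, so that $L = i_*\circ i^*$. The $\Z_{\ell}$-exactness condition \ref{eqHL} says precisely that $i^*\colon H^2_{\acute{e}t}(\overline{X},\Z_{\ell}(1)) \rightarrow H^2_{\acute{e}t}(\overline{Z},\Z_{\ell}(1))$ is split injective, with direct complement the vanishing cycles $H^2_{\acute{e}t}(\overline{Z},\Z_{\ell}(1))_{ev}$. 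By the projection formula the Gysin map $i_*\colon H^2_{\acute{e}t}(\overline{Z},\Z_{\ell}(1)) \rightarrow H^4_{\acute{e}t}(\overline{X},\Z_{\ell}(2))$ is the Poincaré dual of $i^*$; as the relevant lattices are torsion-free ($\overline{X}$ being rational, $H^4_{\acute{e}t}(\overline{X},\Z_{\ell}(2))$ by Proposition \ref{prop10}, and $H^2_{\acute{e}t}(\overline{Z},\Z_{\ell})$ free by hypothesis \ref{eqHL}), the $\Z_{\ell}$-duality pairings are perfect and the dual of a split injection is a split surjection, so $i_*$ is surjective. Since the vanishing cycles are the orthogonal complement of $\im(i^*)$, hence equal to $\ker(i_*)$, one has $\im(i_*) = i_*(\im(i^*)) = \im(L)$, and therefore $L\colon H^2_{\acute{e}t}(\overline{X},\Z_{\ell}(1)) \rightarrow H^4_{\acute{e}t}(\overline{X},\Z_{\ell}(2))$ is surjective. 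Over $\Q_{\ell}$ it is an isomorphism by Hard Lefschetz, and since both lattices are torsion-free this makes $L$ an isomorphism of $\Z_{\ell}[G]$-modules, which in particular restricts to an isomorphism $H^2_{\acute{e}t}(\overline{X},\Z_{\ell}(1))^G \stackrel{\sim}{\longrightarrow} H^4_{\acute{e}t}(\overline{X},\Z_{\ell}(2))^G$.

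Finally I would bring in divisors and conclude over $\F_q$. Combining the two previous paragraphs, cup product with the ($\F_q$-rational) hyperplane class $h\in\CH^1(X)$ gives an isomorphism $L\colon H^2_{\acute{e}t}(X,\Z_{\ell}(1)) \stackrel{\sim}{\longrightarrow} H^4_{\acute{e}t}(X,\Z_{\ell}(2))$. The degree one cycle class map $cl^1_X\colon \CH^1(X)\otimes\Z_{\ell} \rightarrow H^2_{\acute{e}t}(X,\Z_{\ell}(1))$ is surjective: since $\overline{X}$ is rational, $\Pic^0(\overline{X})=0$ and $\NS(\overline{X})$ is free, and as $\Br(\F_q)=0$ one has $\Pic(X)\surj \NS(\overline{X})^G$; flatness of $\otimes\Z_{\ell}$ then yields $\NS(\overline{X})^G\otimes\Z_{\ell} = (\NS(\overline{X})\otimes\Z_{\ell})^G = H^2_{\acute{e}t}(\overline{X},\Z_{\ell}(1))^G = H^2_{\acute{e}t}(X,\Z_{\ell}(1))$. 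By compatibility of the cycle class map with intersection products, $cl^2_X(D\cdot h) = L(cl^1_X(D))$ for every $D\in\CH^1(X)$, so
\begin{equation*}
\im(cl^2_X) \supseteq L\bigl(\im(cl^1_X)\bigr) = L\bigl(H^2_{\acute{e}t}(X,\Z_{\ell}(1))\bigr) = H^4_{\acute{e}t}(X,\Z_{\ell}(2)),
\end{equation*}
which is the asserted surjectivity. The only place where the hypothesis \ref{eqHL} is genuinely used is the integral surjectivity of $L$: rationally it is automatic, but over $\Z_{\ell}$ the cokernel of $L$ is a priori a nonzero finite group, and it is exactly the splitting in \ref{eqHL}, turned via Poincaré duality into surjectivity of $i_*$, that forces this cokernel to vanish.
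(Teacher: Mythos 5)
Your proof is correct and follows essentially the same route as the paper's: surjectivity of $cl^1_X$, an integral Hard Lefschetz isomorphism extracted from condition \ref{eqHL}, Hochschild--Serre descent, and the vanishing of $H^1_{Gal}(\F_q,H^3_{\acute{e}t}(\overline{X},\Z_{\ell}(2)))$ by combining its finiteness (weights) with the torsion-freeness of $H^4_{\acute{e}t}(X,\Z_{\ell}(2))$ from \ref{prop10}. The only local differences are that you derive the surjectivity of $cl^1_X$ from $\Pic(X)\cong\NS(\overline{X})^G$ and $\Br(\F_q)=0$ rather than from \ref{prop1} and \ref{prop4}, and that you dispose of the $H^1$ term at the outset rather than at the end via the kernel--cokernel exact sequence.
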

\begin{proof}
$X$ is geometrically rational, we have the base change condition $\CH_0(X_{\Omega}) = \Z$. So by \ref{prop1} and \ref{prop4}, we have a surjection $cl_X^1: \CH^1(X)\otimes \Z_{\ell} \surj H^2(X,\Z_{\ell}(1))$. Let $H$ be a smooth hyperplane section (over $\F_q$ see \cite{Poo04}) and $G=\Gal(\overline{\F_q}/\F_q)$. Consider the commutative diagram 
\begin{equation}\label{eq6}
\xymatrix{ \CH^1(X)\otimes \Z_{\ell} \ar[d]^{-\cap H} \ar[r]^{\cong} & H^2_{\acute{e}t}(X,\Z_{\ell}(1)) \ar[d]^{-\cap H} \ar@^{->>}[r] & H^2_{\acute{e}t}(\overline{X},\Z_{\ell}(1))^G \ar[d]^{-\cap \overline{H}} \\ \CH^2(X)\otimes \Z_{\ell} \ar[r]^{cl_X^2} & H^4_{\acute{e}t}(X,\Z_{\ell}(2)) \ar@^{->>}[r] & H^4_{\acute{e}t}(\overline{X},\Z_{\ell}(2))^G}
\end{equation}
Since $H^2_{\acute{e}t}(\overline{X},\Z_{\ell}(1))$ and $H^4_{\acute{e}t}(\overline{X},\Z_{\ell}(2))$ are torsion-free by Serre, see e.g. \cite{A-M}, the $G$-equivariant map 
\begin{equation*}
-\cap \overline{H}: H^2_{\acute{e}t}(\overline{X},\Z_{\ell}(1)) \rightarrow H^4_{\acute{e}t}(\overline{X},\Z_{\ell}(2))
\end{equation*}
is then an isomorphism under our assumption \ref{eqHL} by Hard Lefschetz theorem \cite[Thm. 4.1.1]{Del80} (see \cite[p. 223]{Del80} for $\Z_{\ell}$-cohomology). From the commutative diagram \ref{eq6} we can conclude that $\CH^2(X)\otimes \Z_{\ell} \rightarrow H^4_{\acute{e}t}(\overline{X},\Z_{\ell}(2))^G$ is surjective. Over a finite field $\F_q$, the Hochschild-Serre spectral sequence 
\begin{equation*}
E^{a,b}_2 = H^a_{Gal}(\F_q,H^b_{\acute{e}t}(\overline{X},\Z_{\ell}(2))) \Rightarrow H^{a+b}_{\acute{e}t}(X,\Z_{\ell}(2))
\end{equation*}
breaks up into short exact sequence 
\begin{equation*}
0 \rightarrow H^1_{Gal}(\F_q,H^3_{\acute{e}t}(\overline{X},\Z_{\ell}(2))) \rightarrow H^4_{\acute{e}t}(X,\Z_{\ell}(2)) \rightarrow H^4_{\acute{e}t}(\overline{X},\Z_{\ell}(2))^G \rightarrow 0.
\end{equation*}
Apply now the Kernel-Cokernel exact sequence for the bottom maps of the commutative diagram \ref{eq6} above, we have an exact sequence 
\begin{multline*}
0 \rightarrow \Ker(cl^2_X) \rightarrow  \Ker (\CH^2(X)\otimes \Z_{\ell} \rightarrow H^4_{\acute{e}t}(\overline{X},\Z_{\ell}(2))^G) \rightarrow \\ \rightarrow H^1_{Gal}(\F_q,H^3_{\acute{e}t}(\overline{X},\Z_{\ell}(2))) \rightarrow \Coker(cl^2_X) \rightarrow 0
\end{multline*}
By Weil conjecture, see e.g. \cite{CTSS83}, $H^1_{Gal}(\F_q,H^3_{\acute{e}t}(\overline{X},\Z_{\ell}(2)))$ is finite, but from \ref{prop10} we have $H^4_{\acute{e}t}(X,\Z_{\ell}(2))$ is torsion-free, so $H^1_{Gal}(\F_q,H^3_{\acute{e}t}(\overline{X},\Z_{\ell}(2)))$ must vanish, hence $\Coker(cl^2_X) = 0$. 
\end{proof}
\begin{rem}\label{rem7}{\rm
In fact, the cycle class map $cl^2_X$ is an isomorphism for $X$ a smooth projective geometrically rational threefold over a finite field $\F_q$ under condition \ref{eqHL}. The surjectivity is proved above in \ref{prop5} under the condition \ref{eqHL}. The injectivity follows only from the fact that $H^0_{Zar}(X,\sH^3_{\acute{e}t}(\Z_{\ell}(2)))$ is torsion-free by Merkurjev-Suslin theorem ($H^3_{\acute{e}t}(F,\Z_{\ell}(2))$ is torsion-free) hence it must vanish by \ref{prop4} and from the exact sequence of Bloch-Ogus spectral sequence \cite{BO74} 
\begin{multline*}
0 \rightarrow N^1H^3_{\acute{e}t}(X,\Z_{\ell}(2)) \rightarrow H^3_{\acute{e}t}(X,\Z_{\ell}(2)) \rightarrow H^0_{Zar}(X,\sH^3_{\acute{e}t}(\Z_{\ell}(2))) \rightarrow \\ \rightarrow \CH^2(X)\otimes \Z_{\ell} \rightarrow H^4_{\acute{e}t}(X,\Z_{\ell}(2)),
\end{multline*}
without condition \ref{eqHL}.
}
\end{rem}
Now we use the following theorem of B. Kahn
\begin{thm}\cite[Thm. 1.1]{Kah11}\label{thm8}
Let $X$ be a smooth projective variety over a field $k$ and $\ell \neq char(k)$ be a prime. One has an exact sequence 
\begin{equation}\label{eq9}
0 \rightarrow H^0_{Zar}(X,\sH^3_{\acute{e}t}(\Z_{\ell}(2)))\otimes \Q/\Z \rightarrow H^0_{Zar}(X,\sH^3_{\acute{e}t}(\Q_{\ell}/\Z_{\ell}(2))) \rightarrow C_{tors} \rightarrow 0,
\end{equation} 
where $C_{tors}$ is the torsion subgroup of the cokernel of $cl^2_X$.
\end{thm}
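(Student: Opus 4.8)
The plan is to obtain the sequence \ref{eq9} from the change-of-coefficients triangle $\Z_\ell(2)\to\Q_\ell(2)\to\Q_\ell/\Z_\ell(2)\xrightarrow{+1}$ by transporting it to the Zariski cohomology sheaves $\sH^\bullet_{\acute{e}t}$, taking global sections, and then identifying the two outer terms through the Bloch--Ogus spectral sequence and the cycle class map. First I would apply $R\alpha_*$ to the triangle and pass to cohomology sheaves, producing a long exact sequence of Zariski sheaves
\[
\cdots\to\sH^3_{\acute{e}t}(\Z_\ell(2))\xrightarrow{a}\sH^3_{\acute{e}t}(\Q_\ell(2))\xrightarrow{b}\sH^3_{\acute{e}t}(\Q_\ell/\Z_\ell(2))\xrightarrow{\partial}\sH^4_{\acute{e}t}(\Z_\ell(2))\xrightarrow{c}\sH^4_{\acute{e}t}(\Q_\ell(2))\to\cdots .
\]
Since $\Q_\ell=\varinjlim\ell^{-n}\Z_\ell$ and sheafification commutes with filtered colimits, $\sH^n_{\acute{e}t}(\Q_\ell(2))=\sH^n_{\acute{e}t}(\Z_\ell(2))\otimes_{\Z_\ell}\Q_\ell$; hence $\Coker(a)=\sH^3_{\acute{e}t}(\Z_\ell(2))\otimes\Q_\ell/\Z_\ell$ and $\Ker(c)=\sH^4_{\acute{e}t}(\Z_\ell(2))_{tors}$, and the long exact sequence breaks off the universal coefficient sequence of sheaves
\[
0\to\sH^3_{\acute{e}t}(\Z_\ell(2))\otimes\Q_\ell/\Z_\ell\to\sH^3_{\acute{e}t}(\Q_\ell/\Z_\ell(2))\to\sH^4_{\acute{e}t}(\Z_\ell(2))_{tors}\to 0. \qquad(\dagger)
\]

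Next I would apply $H^0_{Zar}$ to $(\dagger)$. Left exactness gives an injection of $H^0_{Zar}(\sH^3_{\acute{e}t}(\Z_\ell(2))\otimes\Q_\ell/\Z_\ell)$ into $H^0_{Zar}(\sH^3_{\acute{e}t}(\Q_\ell/\Z_\ell(2)))$, together with a map $\beta\colon H^0_{Zar}(\sH^3_{\acute{e}t}(\Q_\ell/\Z_\ell(2)))\to H^0_{Zar}(\sH^4_{\acute{e}t}(\Z_\ell(2))_{tors})$ whose kernel is that image, so that the cokernel of the injection is $\im(\beta)$. Because $X$ is Noetherian, $H^0_{Zar}$ commutes with the filtered colimit computing $-\otimes\Q_\ell/\Z_\ell$, and torsion-freeness of $H^0_{Zar}(\sH^3_{\acute{e}t}(\Z_\ell(2)))$ (its sections inject into $H^3_{\acute{e}t}(F,\Z_\ell(2))$, which is torsion-free by Merkurjev--Suslin, cf. \ref{rem7}) identifies this first term with $H^0_{Zar}(\sH^3_{\acute{e}t}(\Z_\ell(2)))\otimes\Q/\Z$. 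Here one must verify that the a priori possible divisible contribution of $H^1_{Zar}(\sH^3_{\acute{e}t}(\Z_\ell(2)))$ to the colimit is absorbed, which I would control through the Gersten resolution. This yields the first two terms of \ref{eq9}, and it remains to identify $\im(\beta)$ with $C_{tors}$.

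For the cokernel I would compare $\beta$ with the cycle class map via the Bloch--Ogus spectral sequence $E_2^{i,j}=H^i_{Zar}(X,\sH^j_{\acute{e}t}(\Z_\ell(2)))\Rightarrow H^{i+j}_{\acute{e}t}(X,\Z_\ell(2))$. In it $cl^2_X$ is the edge map $E_2^{2,2}=\CH^2(X)\otimes\Z_\ell\twoheadrightarrow E_\infty^{2,2}=N^2H^4_{\acute{e}t}(X,\Z_\ell(2))\hookrightarrow H^4_{\acute{e}t}(X,\Z_\ell(2))$ (using $N^3H^4=E_\infty^{3,1}=0$ by the Gersten length bound), so $\im(cl^2_X)=N^2H^4$ and $\Coker(cl^2_X)=H^4/N^2H^4$, while the Bockstein $\partial$ of $(\dagger)$ detects the $\ell$-torsion of $H^4$ not reached by deepest coniveau. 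This comparison is packaged cleanly by Kahn's factorization $cl^2_X=\rho\circ i$ (cf. \ref{eqcycl}), where $i\colon\CH^2(X)\otimes\Z_\ell\hookrightarrow\H^4_{\acute{e}t}(X,\Z(2))\otimes\Z_\ell$ has cokernel $H^0_{Zar}(\sH^3_{\acute{e}t}(\Q_\ell/\Z_\ell(2)))$ \cite[Prop. 2.8]{Kah11} and $\rho\colon\H^4_{\acute{e}t}(X,\Z(2))\otimes\Z_\ell\to H^4_{\acute{e}t}(X,\Z_\ell(2))$. The kernel--cokernel sequence of this factorization reads
\begin{multline*}
0\to\Ker(cl^2_X)\to\Ker(\rho)\to H^0_{Zar}(\sH^3_{\acute{e}t}(\Q_\ell/\Z_\ell(2)))\\ \xrightarrow{\delta}\Coker(cl^2_X)\to\Coker(\rho)\to 0 .
\end{multline*}
Since the unramified group is torsion, $\im(\delta)\subseteq C_{tors}$; if $\Coker(\rho)$ is torsion-free then $\im(\delta)=C_{tors}$, and $\Ker(\delta)=\Ker(\rho)/\Ker(cl^2_X)$ is to be identified with the first term $H^0_{Zar}(\sH^3_{\acute{e}t}(\Z_\ell(2)))\otimes\Q/\Z$ of the previous step, drawing on \cite[Cor. 4.20]{Kah11}; consistency then forces $\im(\beta)=C_{tors}$ and recovers \ref{eq9}.

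The main obstacle is precisely this last identification of the cokernel: showing $\delta$ surjects onto all of $C_{tors}$, equivalently that $\Coker(\rho)$ is torsion-free so that no torsion class of $\Coker(cl^2_X)$ survives, together with the matching of $\Ker(\delta)$ with the integral unramified group tensored with $\Q/\Z$. Both hinge on Kahn's fine comparison between \'etale motivic cohomology and $\ell$-adic cohomology (\cite[Prop. 2.8, Cor. 4.20]{Kah11}); by contrast the passage $(\dagger)$ and the left-exactness step are formal.
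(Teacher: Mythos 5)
First, a caveat on the comparison you asked for: the paper contains no proof of this statement at all --- Theorem \ref{thm8} is quoted verbatim from \cite[Thm. 1.1]{Kah11} and used as a black box. So the only meaningful benchmark is Kahn's own argument, whose skeleton your second half in fact reconstructs correctly: the factorization $cl^2_X=\rho\circ i$ through \'etale motivic cohomology as in \ref{eqcycl}, the identification $\Coker(i)\cong H^0_{Zar}(X,\sH^3_{\acute{e}t}(\Q_{\ell}/\Z_{\ell}(2)))$ from \cite[Prop. 2.8]{Kah11}, and the kernel--cokernel sequence of the composition.

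The genuine gap is that at the two decisive points your proposal defers rather than proves. (a) Torsion-freeness of $\Coker(\rho)$ and (b) the identification of $\Ker(\delta)=\Ker(\rho)/\Ker(cl^2_X)$ with $H^0_{Zar}(X,\sH^3_{\acute{e}t}(\Z_{\ell}(2)))\otimes\Q/\Z$ are precisely the hard content of Kahn's comparison between $\H^4_{\acute{e}t}(X,\Z(2))\otimes\Z_{\ell}$ and continuous $\ell$-adic cohomology (divisible kernel, torsion-free cokernel, with \cite[Cor. 4.20]{Kah11} computing the torsion of $\Ker(\rho)$); you cite these but give no argument, so the ``proof'' at this point simply is Kahn's. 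Worse, the closing step --- ``consistency then forces $\im(\beta)=C_{tors}$'' --- is not an inference. You have produced two a priori unrelated filtrations of $H^0_{Zar}(X,\sH^3_{\acute{e}t}(\Q_{\ell}/\Z_{\ell}(2)))$: one via the sheaf-level Bockstein $\beta$ coming from $(\dagger)$ and Bloch--Ogus, the other via the connecting map $\delta$ from the motivic factorization. Nothing in the proposal shows $\Ker(\beta)=\Ker(\delta)$, i.e.\ that the two embedded copies of the first term coincide; this compatibility between the Bloch--Ogus and motivic descriptions is exactly what a proof must establish, not assume.

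There is also a mathematical misstep in your first half: you hope the $H^1_{Zar}(X,\sH^3_{\acute{e}t}(\Z_{\ell}(2)))[\ell^n]$-contributions to $H^0_{Zar}$ of the colimit are ``absorbed'' via the Gersten resolution. They cannot be in general: the failure of $H^0_{Zar}(X,\sH^3_{\acute{e}t}(\Z_{\ell}(2)))\otimes\Q/\Z \rightarrow H^0_{Zar}(X,\sH^3_{\acute{e}t}(\Q_{\ell}/\Z_{\ell}(2)))$ to be surjective is precisely what the theorem measures --- its cokernel is $C_{tors}$, which is nonzero in general; indeed, its vanishing for geometrically rational threefolds over finite fields under condition \ref{eqHL} is the main theorem \ref{mainthm} of this very note, and Remark \ref{rem7} depends on that nonvanishing being a genuine possibility. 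An argument showing the discrepancy always vanishes would therefore prove too much. What remains sound in your first half is only the injectivity of the first map (via torsion-freeness of sections of $\sH^3_{\acute{e}t}(\Z_{\ell}(2))$ by Merkurjev--Suslin, as in Remark \ref{rem7}); everything beyond that is carried by the deferred Kahn machinery.
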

As $C_{tors} = 0$ by \ref{prop5}, so $H^3_{nr}(F/k,\Q_{\ell}/\Z_{\ell}(2))$ is divisible by \ref{eq9}, so it must vanish by \ref{prop4}, so we finish the proof of the theorem \ref{mainthm}. 
\begin{rem}\label{remHP}
\rm{Let $X$ be a smooth projective threefold over an algebraic closure $\overline{\F}$ of a finite field $\F_q$ with a smooth ample divisor $Y \inj X$. If the Brauer group $\Br(Y)$ is finite, then $\CH^2(X)\otimes \Z_{\ell}$ maps surjectively onto $H^4_{\acute{e}t}(X,\Z_{\ell}(2))$. Indeed, $\underset{n}{\varprojlim}H^2_{nr}(\overline{\F}(Y)/\overline{\F},\mu_{\ell^n})$ will be trivial under the assumption of finiteness of $\Br(Y)$. So by \ref{prop1}, we have $\CH^1(Y) \surj H^2_{\acute{e}t}(Y,\Z_{\ell}(1))$. By weak Lefschetz theorem \cite{Del80} one has a surjection $H^2_{\acute{e}t}(Y,\Z_{\ell}(1)) \surj H^4_{\acute{e}t}(X,\Z_{\ell}(2))$. So $H^4_{\acute{e}t}(X,\Z_{\ell}(2))$ is generated by $1$-cycles as one looks at the following commutative diagram 
\begin{equation*}
\xymatrix{  \CH^1(Y)\otimes \Z_{\ell} \ar@^{->>}[d] \ar[r] & \CH^2(X) \otimes \Z_{\ell} \ar[d] \ar[r] & \CH^2(X-Y) \otimes \Z_{\ell} \ar[r] & 0 \\ H^2_{\acute{e}t}(Y,\Z_{\ell}(1)) \ar@^{->>}[r] & H^4_{\acute{e}t}(X,\Z_{\ell}(2))  }
\end{equation*}
}
\end{rem}

\thanks{$\bold{Acknowledgement:}$ I thank J.-L. Colliot-Th\'el\`ene and H. Esnault for reading an earlier version of this note, which contains a mistake in the proof of theorem \ref{mainthm}.}   
\bibliographystyle{plain}
\renewcommand\refname{References}

\end{document}